\newtheorem{theorem}{Theorem}[section]
\newtheorem{lemma}[theorem]{Lemma}
\newtheorem{proposition}[theorem]{Proposition}
\newtheorem{corollary}[theorem]{Corollary}
\theoremstyle{definition}
\newtheorem{definition}[theorem]{Definition}
\theoremstyle{definition}
\newtheorem{remark}[theorem]{Remark}
\theoremstyle{definition}
\theoremstyle{definition}
\numberwithin{equation}{section}
\let\div\undefined
\DeclareMathOperator{\div}{div}
\let\vol\undefined
\newcommand{\vol}{\textnormal{vol}}
\newcommand{\sff}{\mathrm{I\;\!\!I}}
\let\tr\undefined
\DeclareMathOperator{\tr}{tr}
\let\sgn\undefined
\DeclareMathOperator{\sgn}{sgn}
\let\uball\undefined
\newcommand{\uball}{\mathbb{B}}
\let\usphere\undefined
\newcommand{\usphere}{\mathbb{S}}
\let\Real\undefined
\newcommand{\Real}{\mathbb{R}}
\let\D\undefined
\newcommand{\D}{\textnormal{D}}
\let\S\undefined
\newcommand{\S}{\mathrm{S}}
\let\W\undefined
\newcommand{\W}{\mathcal{W}}
\title[Monotonicity formula for anisotropic minimal hypersurfaces]{A monotonicity formula for anisotropic minimal hypersurfaces}
\author{Doanh Pham}
\address{Beijing International Center for Mathematical Research, Peking University, Beijing, China}
\email{doanhpham@pku.edu.cn}
\begin{document}
\begin{abstract}
Under a sign assumption on the Minkowski norm, we prove a monotonicity formula for anisotropic minimal hypersurfaces in Euclidean space.
\end{abstract}
	\maketitle

\textsc{Important note:} After the initial version of this note was posted, we learned that in dimension $n+1 \geq 3$, a Minkowski norm $F$ satisfying Condition S \eqref{eq: sign condition for F} necessarily takes the form \eqref{eq: norm-typed function}, as proved in \cite[p.~440]{Allard_1974_area_integrand}.

\section{Introduction}
Minimal hypersurfaces arise as critical points of the Euclidean area functional and are characterized by the condition that their mean curvature vanishes identically. One of the fundamental tools in the analysis of minimal hypersurfaces of the Euclidean space is the monotonicity formula, which provides a quantitative measure of how area concentrates near a point. Specifically, let $M^n \subset \Real^{n+1}$ be a smooth minimal hypersurface. The classical monotonicity formula (see, e.g., \cite{Simon_Lectures on geometric measure theory}) asserts that the normalized area ratio
\[ r \longmapsto \frac{|M \cap \uball^{n+1}_r|}{r^n} \]
is monotone increasing as long as $\partial M \cap \uball^{n+1}_r = \emptyset$, where $\uball^{n+1}_r$ denotes the open ball of radius $r$ centered at the origin in $\Real^{n+1}$. This formula plays an important role in regularity theory and the study of singularities.
	
In many variational problems (see e.g. \cite{Chodosh-Li_ForumPi2023, DePhilippis-DeRosa_CPAM2024_anisotropic_min-max, DeRosa-Kolasinski-Santilli_ARMA2020_Uniqueness_critical_anisotropic_isoperimetric, Du-Yang_MathAnn2024_flatness_anisotropic, Figalli-Maggi_ARMA2011_liquid_drops_crystals, Mooney-Yang_Invent2024_Bernstein} and the references therein), the isotropic area functional is replaced by an anisotropic energy
\[\mathcal{A}_F(M) = \int_M F(\nu) \, d\vol_M\]
where the Minkowski norm $F : \Real^{n+1} \backslash \{0\} \to (0, \infty)$ is a smooth positively $1$-homogeneous function satisfying the elliptic condition that $\D^2 F^2$ is positive definite on $\Real^{n+1} \backslash \{0\}$ (which is equivalent to \eqref{eq: elliptic condition of F}). Critical points of this energy functional are called anisotropic minimal (or $F$-minimal) hypersurfaces and are characterized by the vanishing of their anisotropic mean curvature. While these hypersurfaces share many similarities with their isotropic counterparts, the loss of symmetry gives rise to substantial geometric and analytic challenges. In particular, no monotonicity formula is known for arbitrary Minkowski norms. In this note, we prove a monotonicity formula for $F$-minimal hypersurfaces provided that $F$ satisfies a suitable sign condition. Let $F^\circ$ be the dual Minkowski norm of $F$. We will impose the following condition:
\begin{flalign}\label{eq: sign condition for F}
\textnormal{\textbf{Condition S:}} \quad \sgn \langle \D F(u), \D F^{\circ}(v) \rangle = \sgn \langle u, v \rangle \quad \text{for all}\;\; u, v \in \Real^{n+1} \backslash\{0\}.
\end{flalign}
Let $\W$ be the Wulff shape of $F$ which bounds a convex domain $\Omega$ of $\Real^{n+1}$. The main result of this note is the following statement:
\begin{theorem} \label{thm: monotonicity for anisotropic minimal}
	Let $M$ be an oriented smooth hypersurface of $\Real^{n+1}$ with a chosen unit normal vector field $\nu$. Suppose that $M$ is $F$-minimal and satisfies $\partial M \subset \bar{r}\W$ for some $\bar{r} > 0$. Then, for every $0 < s < r < \bar{r}$, we have
	$$\frac{1}{r^n} \int_{M \cap (r\Omega)} F(\nu) - \frac{1}{s^n} \int_{M \cap (s\Omega)} F(\nu) = \int_{M \cap (r\Omega) \backslash (s\Omega)} \frac{\langle \D F^{\circ}(x), \D F(\nu) \rangle \langle x, \nu \rangle}{F^{\circ}(x)^{n+1}} \; d\vol_M(x).$$
	In particular, if $F$ satisfies Condition S \eqref{eq: sign condition for F}, then the normalized anisotropic energy
	$$r \longmapsto \frac{1}{r^n} \int_{M \cap (r\Omega)} F(\nu)$$
	is monotone increasing, and is constant if and only if $M$ is a hyperspace of $\Real^{n+1}$.
\end{theorem}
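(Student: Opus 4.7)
The plan is to mimic the classical isotropic proof, whose engine is the tangent vector field $x^T$ satisfying $\div_M x^T = n$ on minimal $M$. In the anisotropic setting the weight $F(\nu)$ must enter the divergence identity, and the natural replacement for $x^T$ is an anisotropic tangential projection of $F(\nu)\, x$:
\[
Z := F(\nu)\, x - \langle x, \nu\rangle\, \D F(\nu).
\]
Euler's identity $\langle \D F(\nu), \nu\rangle = F(\nu)$ makes $\langle Z, \nu\rangle = 0$, so $Z$ is tangent to $M$, and the main task is to verify $\div_M Z = n F(\nu)$ under the $F$-minimal hypothesis.

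To compute the divergence, I split $\div_M Z = \div_M(F(\nu)\, x) - \div_M(\langle x, \nu\rangle\, \D F(\nu))$. Using $\D_{e_i} x = e_i$ gives $\div_M(F(\nu)\, x) = n F(\nu) + \langle \nabla^M F(\nu), x\rangle$; and the Leibniz rule gives $\div_M(\langle x, \nu\rangle\, \D F(\nu)) = \langle \nabla^M \langle x, \nu\rangle, \D F(\nu)\rangle + \langle x, \nu\rangle H_F$, where $H_F := \div_M \D F(\nu)$ is the anisotropic mean curvature and vanishes by hypothesis. The two scalar-gradient terms cancel because the shape operator $A := -\D\nu|_{TM}$ is self-adjoint: a short check gives $\nabla^M F(\nu) = -A((\D F(\nu))^T)$ and $\nabla^M \langle x, \nu\rangle = -A(x^T)$, so both inner products equal $-\langle A(x^T), (\D F(\nu))^T\rangle$.

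Since $\partial M \cap (\rho\Omega) = \emptyset$ for $\rho < \bar r$, the divergence theorem yields $n \int_{M \cap (\rho\Omega)} F(\nu)\, d\vol_M = \int_{M \cap \partial(\rho\Omega)} \langle Z, \eta\rangle\, d\sigma$, where $\eta = (\D F^\circ)^T / |\nabla^M F^\circ|$ is the outward unit conormal. The identities $\langle x, \D F^\circ(x)\rangle = F^\circ(x) = \rho$ on $\partial(\rho\Omega)$ and $\langle \D F(\nu), \nu\rangle = F(\nu)$ collapse the boundary integrand to
\[
\langle Z, \eta\rangle = \frac{\rho F(\nu) - \langle x, \nu\rangle \langle \D F(\nu), \D F^\circ(x)\rangle}{|\nabla^M F^\circ|}.
\]
Coarea simultaneously gives $\frac{d}{d\rho}\int_{M \cap (\rho\Omega)} F(\nu)\, d\vol_M = \int_{M \cap \partial(\rho\Omega)} F(\nu)/|\nabla^M F^\circ|\, d\sigma$. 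Substituting both identities into $\frac{d}{d\rho}[\rho^{-n} \int F(\nu)]$, the $\rho F(\nu)$ contributions cancel and leave
\[
\frac{d}{d\rho}\left[\rho^{-n}\int_{M \cap (\rho\Omega)} F(\nu)\, d\vol_M\right] = \rho^{-n-1} \int_{M \cap \partial(\rho\Omega)} \frac{\langle \D F^\circ(x), \D F(\nu)\rangle \langle x, \nu\rangle}{|\nabla^M F^\circ|}\, d\sigma.
\]
Integrating in $\rho$ from $s$ to $r$ and applying coarea in reverse produces the claimed identity.

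For the last two assertions, Condition S applied with $(u, v) = (\nu, x)$ gives $\langle \D F^\circ(x), \D F(\nu)\rangle \langle x, \nu\rangle \geq 0$, with equality iff $\langle x, \nu\rangle = 0$; hence the normalized energy is nondecreasing, and constancy forces $\langle x, \nu\rangle \equiv 0$ on $M$, i.e.\ $M$ is invariant under the scalings $x \mapsto \lambda x$, which together with smoothness of $M$ forces $M$ to be a hyperplane through the origin. The only creative step is spotting the vector field $Z$ above: its tangency repairs the failure of $F(\nu)\, x$ to be tangent, and the shape-operator-symmetry cancellation removes the anisotropic mean curvature from $\div_M Z$. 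Everything else reduces to Euler-homogeneity bookkeeping and coarea.
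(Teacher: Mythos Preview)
Your proof is correct and is essentially the paper's own argument: your vector field $Z$ is exactly the paper's affine tangential part $x^{\top_\xi}$ with $\xi=\nu_F=\D F(\nu)$, and your identity $\div_M Z = nF(\nu)$ is the paper's Lemma~3.4(ii). The only cosmetic difference is that the paper applies the divergence theorem to the weighted field $V = Z/(n(F^\circ)^n)$ on the annulus $M\cap(r\Omega)\setminus(s\Omega)$ directly, whereas you differentiate in $\rho$ via coarea and integrate back; the two routes are equivalent.
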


As a consequence, we obtain the sharp lower bound for area of anisotropic minimal hypersurfaces of the Wulff shape passing through the origin.
\begin{corollary} \label{cor: lower bound for anisotropic minimal in Wulff}
Let $M$ be a $F$-minimal hypersurface of $\Omega$ which contains the origin and satisfies $\partial M \subset \mathcal{W}$. Suppose that $F$ satisfies Condition S \eqref{eq: sign condition for F}. Then
\[\int_M F(\nu) \geq F(\nu(0)) \left|\Omega \cap T_0M \right|.\]
	The equality holds if and only if $M$ is a hyperspace of $\Real^{n+1}$.
\end{corollary}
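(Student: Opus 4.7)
The plan is to deduce the corollary from the monotonicity formula of Theorem~\ref{thm: monotonicity for anisotropic minimal} by sending one parameter to the boundary radius and the other to $0$, with the second limit computed by a tangent-plane blow-up.

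Since $\partial M \subset \mathcal{W} = \partial \Omega$, Theorem~\ref{thm: monotonicity for anisotropic minimal} applies with $\bar{r} = 1$, and Condition S gives, for all $0 < s < r < 1$,
\[
\frac{1}{s^n} \int_{M \cap (s\Omega)} F(\nu) \; \leq \; \frac{1}{r^n} \int_{M \cap (r\Omega)} F(\nu).
\]
Letting $r \nearrow 1$, monotone convergence together with $M \subset \overline{\Omega}$ shows that the right-hand side tends to $\int_{M} F(\nu)$. It therefore suffices to identify the limit of the left-hand side as $s \searrow 0$ with $F(\nu(0)) \, |\Omega \cap T_0 M|$.

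To compute this limit, I would blow up $M$ at the origin. Since $M$ is smooth and $0 \in M$, in a neighborhood of $0$ it is a $C^1$ graph over $T_0 M$, and the rescaled hypersurfaces $M_s := s^{-1} M$ converge in $C^1_{\textnormal{loc}}$ to $T_0 M$ as $s \to 0$. Using the scale-invariance of the Gauss map and the $n$-homogeneity of the volume element, the change of variables $x \mapsto x/s$ gives
\[
\frac{1}{s^n} \int_{M \cap (s\Omega)} F(\nu) \, d\vol_M \;=\; \int_{M_s \cap \Omega} F(\nu_{M_s}) \, d\vol_{M_s}.
\]
Because $\Omega$ is bounded, $F$ is continuous and bounded on the unit sphere, and $T_0 M \cap \partial \Omega$ has zero $n$-dimensional measure in $T_0 M$, the $C^1$ convergence of $M_s$ to $T_0 M$ on a region slightly larger than $\Omega$ lets one pass to the limit, obtaining $F(\nu(0)) \, |\Omega \cap T_0 M|$. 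Combining the two limits proves the desired inequality.

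For the equality case, if $\int_M F(\nu) = F(\nu(0)) \, |\Omega \cap T_0 M|$, then the monotone quantity $r \mapsto r^{-n} \int_{M \cap (r\Omega)} F(\nu)$ must be constant on $(0,1)$; the rigidity part of Theorem~\ref{thm: monotonicity for anisotropic minimal} then forces $M$ to be a piece of a hyperplane. The main technical obstacle is justifying the tangent-plane limit: one needs uniform $C^1$ control of $M$ near $0$ and a careful treatment of $M_s \cap \partial\Omega$ in the limit. Since $M$ is assumed smooth, this is a standard blow-up argument, but it is the only step that goes beyond a direct application of Theorem~\ref{thm: monotonicity for anisotropic minimal}.
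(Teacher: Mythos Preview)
Your proposal is correct and follows exactly the same approach as the paper: apply the monotonicity of Theorem~\ref{thm: monotonicity for anisotropic minimal} and identify the $s\to 0$ limit as $F(\nu(0))\,|\Omega\cap T_0M|$. In fact the paper's proof is a single line asserting this limit ``since $0\in M$'', so your blow-up justification and discussion of the equality case simply fill in details the paper leaves implicit.
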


A simple example of Minkowski norms satisfying Condition S (\ref{eq: sign condition for F}) is a function of the form
\begin{equation}\label{eq: norm-typed function}
	F(u) = \sqrt{\langle Au, u \rangle}
\end{equation}
for any positive definite symmetric matrix $A$. Specifically, in this case, $F^{\circ}(v) = \sqrt{\langle A^{-1}v, v \rangle}$ and the following stronger condition holds
	\begin{equation}\label{eq: condition of Ferone-Kawohl}
		\langle \D F(u), \D F^{\circ}(v) \rangle = \frac{\langle u, v \rangle}{F(u)F^{\circ}(v)} \quad \text{for}\;\; u, v \ne 0.
	\end{equation}
To the author's knowledge, condition \eqref{eq: condition of Ferone-Kawohl} first appeared in \cite{Ferone-Kawohl_PAMS2009} where it was shown to be both necessary and sufficient for the mean value property of anisotropic harmonic functions. These functions are solutions of
$$\Delta_F u \coloneqq \div (\D Q^{\circ} (\D u)) = 0,$$
where $Q^\circ \coloneqq \frac{1}{2} F^2$. Condition S \eqref{eq: sign condition for F} was later introduced in \cite{Cozzi-Farina-Valdinoci_Adv2016_Monotonicity_anisotropic_PDE}, where the authors established monotonicity formulas for solutions to a class of anisotropic equations. Moreover, they observed that any positive, $1$-homogeneous convex function satisfying both the ellipticity condition and \eqref{eq: condition of Ferone-Kawohl} must necessarily be of the form \eqref{eq: norm-typed function}. They also provided a characterization of even Minkowski norms in $\Real^2$ that satisfy the ellipticity condition together with Condition S \eqref{eq: sign condition for F}.

As a byproduct of the preliminary computations underlying the proof of the main result, we obtain a Minkowski-type formula for affine $k$-curvatures. Details are given in the appendix.
\medskip

\noindent \textit{Conventions and notations}. In this note, we use $\D$ to denote the standard derivatives on the Euclidean space $\Real^{n+1}$. Let $M$ be a hypersurface in $\Real^{n+1}$. The Euclidean connection on $M$ is denoted by $\D^M$, and the space of smooth tangent vector fields on $M$ is denoted by $\mathfrak{X}(M)$. A choice of unit normal vector field is denoted by $\nu$ and the second fundamental form with respect to $\nu$ is denoted by $\sff_\nu$. We also use $\sff_\nu$ to denote the shape operator $X \in TM \mapsto -\D_X \nu$. The mean curvature of $M$ is given by $H_{\nu} = \tr(\sff_{\nu})$. For $x \in M$ and $\zeta \in \Real^{n+1}$, we denote by $\zeta^\top$ the tangential part of $\zeta$ on $T_x M$ and by $\zeta^\perp$ its orthogonal complement. We denote by $\eta$ the outer unit conormal vector field on $\partial M$.

For a linear map $L : \Real^{n+1} \to \Real^{n+1}$ and $x \in M$, we denote by $[L]_M$ the linear map given by $\eval{L}_{T_xM}$ followed by the projection onto $T_xM$. Specifically, $[L]_M: T_xM \to T_xM$ is the linear map satisfying
\[[L]_M (X) = L(X)^\top \quad \text{for every}\;\; X \in T_xM.\]
	
\section{Preliminaries}

\subsection{Minkowski norms and Wulff shapes}
Throughout this note, let $\Omega \subset \Real^{n+1}$ be a uniformly convex bounded domain containing the origin with smooth boundary $\mathcal{W}$. The support function $F$ of $\Omega$ is given by
$$F(u) \coloneqq \sup_{x \in \Omega} \langle u, x \rangle = \max_{x \in \mathcal{W}}\, \langle u, x \rangle, \quad u \in \Real^{n+1}.$$
In the literature, $F$ is called a \textit{Minkowski norm} and $\mathcal{W}$ is called the \textit{Wulff shape} of $F$. It is well-known that $F$ is a positively $1$-homogeneous convex function, smooth on $\Real^{n+1} \backslash \{0\}$, which satisfies
\begin{equation}\label{eq: elliptic condition of F}
	\D^2_{\usphere^n} F(u) + F(u)I_n > 0 \quad \text{on}\;\; T_u\usphere^n \;\; \text{for every}\;\; u \in \usphere^n.
\end{equation}
By Euler homogeneous theorem, we have
\begin{equation} \label{eq: Euler homo for F}
	F(u) = \langle \D F(u), u \rangle \quad \text{for all} \;\; u \in \Real^{n+1}.
\end{equation}
Differentiating \eqref{eq: Euler homo for F} gives
\begin{align}\label{eq: [D^2F(u)](u) = 0}
	[\D^2F(u)](u) = \vec{0} \quad \text{for all} \;\; u \in \Real^{n+1}.
\end{align}

The \textit{polar set} of $\Omega$ is the convex bounded domain defined by
$$\Omega^{\circ} \coloneqq \{u: \langle u, x \rangle \leq 1 \;\; \text{for all}\;\; x \in \Omega\} = \{u: F(u) \leq 1\}.$$
The \textit{dual Minkowski norm} of $F$ is the support function $F^{\circ}$ of $\Omega^{\circ}$. Then $F^{\circ}$ is smooth on $\Real^{n+1}\backslash\{0\}$ and satisfies
\[F^{\circ}(v) = \sup_{u \ne 0} \frac{\langle u, v \rangle}{F(u)}, \quad v \in \Real^{n+1}.\]
In addition, $F^\circ = 1$ on $\mathcal{W}$.

\subsection{Affine geometry}
	
In fact, we will prove a slightly more general result of the main theorem which is stated in affine geometry setting. Basic notions in this subsection can be found in \cite{Nomizu-Sasaki_Book_Affine_geometry}. Let $M$ be a hypersurface of $\Real^{n+1}$ and suppose that $\xi$ is a smooth transversal vector field on $M$. We consider the following decompositions in $\Real^{n+1} = T_x M \oplus \textnormal{Span}\{\xi_x\}$:
\begin{align}
\D_X Y &= \nabla_X Y + h(X, Y)\xi \quad \text{for} \;\; X, Y \in \mathfrak{X}(M); && (\text{Gauss equation}) \label{eq: Gauss equation}\\
\D_X \xi &= -\S(X) + \tau(X)\xi \quad\;\; \text{for} \;\; X \in \mathfrak{X}(M). && (\text{Weingarten equation}) \label{eq: Weingarten equation}
\end{align}
Then, Gauss equation \eqref{eq: Gauss equation} defines a torsion-free connection $\nabla$ which is called the \textit{induced affine connection}, and a symmetric $2$-form $h$ which is called the \textit{affine (second) fundamental form} on $M$. Moreover, Weingarten equation \eqref{eq: Weingarten equation} defines a $(1,1)$ tensor $\S$ which is called the \textit{affine shape operator}, and a $1$-form $\tau$ which is called the \textit{transversal connection form} on $M$. With all these notions, $(M, \xi)$ is an \textit{affine hypersurface} of $\Real^{n+1}$. The \textit{affine mean curvature} of $(M, \xi)$ is $H_{\xi} \coloneqq \tr(\S)$.
\begin{definition}
A transversal vector field $\xi$ on $M$ is said to be \textit{equiaffine} if $\D_X \xi \in T_x M$ for every $X \in T_x M$ at each $x \in M$ (i.e., the transversal connection form vanishes identically). In this case, $(M, \xi)$ is called an \textit{equiaffine hypersurface} of $\Real^{n+1}$.
\end{definition}

\subsection{Anisotropic geometry}
Let $M$ be an oriented hypersurface of $\Real^{n+1}$ equipped with a chosen unit normal vector field $\nu$. The \textit{anisotropic normal} on $M$ is the vector field $\nu_F \coloneqq \D F(\nu)$. Since $\langle \nu_F, \nu \rangle = F(\nu) > 0$, it follows that $\nu_F$ is a transversal vector field on $M$. Moreover, by \eqref{eq: [D^2F(u)](u) = 0}, we have
\[\D_X \nu_F = \D_X (F(\nu)) = \langle [\D^2 F(\nu)](\nu), X \rangle = 0 \quad \text{for all} \;\; X \in \mathfrak{X}(M).\]
Thus $(M, \nu_F)$ is an equiaffine hypersurface of $\Real^{n+1}$. With $\xi = \nu_F$ in \eqref{eq: Weingarten equation}, the \textit{anisotropic mean curvature} is $H_{\nu_F} = \tr(X \mapsto -\D_X \nu_F) = -\div_M \nu_F$ on $M$. 

\begin{definition}
We say that $M$ is \textit{$F$-minimal} if its anisotropic mean curvature vanishes identically.
\end{definition}

More details on anisotropic geometry can be found in, for example, \cite{He-Li_Sinica2008_Integral_Minkowski, Jia-Wang-Xia-Zhang_ARMA2023_Alexandrov, Xia_IUMJ2013_AnisotropicMinkowski} and the references therein.
	
\section{Monotonicity formula}

In this section, we prove the main result. To do this, we first define an appropriate notion of the tangential part of a vector field and prove some useful formulas.

\begin{definition}
Let $\xi$ be a transversal vector field on $M$. For $V \in \Real^{n+1}$, the\textit{ affine tangential part} of $V$ at $x \in M$ is defined by
\[V^{\top_\xi} \coloneqq (V \wedge \xi) \nu = \langle \xi, \nu \rangle V - \langle V, \nu \rangle \xi.\]
In other words, $V$ is decomposed by
\[V = \frac{V^{\top_\xi} }{\langle \xi, \nu \rangle} + \frac{\langle V, \nu \rangle}{\langle \xi, \nu \rangle} \xi.\]
\end{definition}

\begin{remark}
With this notation, the Gauss equation \eqref{eq: Gauss equation} means
\begin{equation} \label{eq: Gauss equation rewritten}
\nabla_X Y = \frac{(\D_X Y)^{\top_\xi}}{\langle \xi, \nu \rangle} \quad \text{and} \quad h(X, Y) = \frac{\sff_\nu(X, Y)}{\langle \xi, \nu \rangle} \quad \text{for all} \;\; X, Y \in \mathfrak{X}(M).
\end{equation}
\end{remark}  
Given a vector field $X$ on an equiaffine hypersurface $(M, \xi)$, we derive some formulas for $X^{\top_\xi}$.

\begin{lemma} \label{lem: D^M X^top_xi}
	Let $(M, \xi)$ be an equiaffine hypersurface. Suppose that $X$ is a smooth vector field on $M$, then
	\[\D^M X^{\top_\xi} = \langle \xi, \nu \rangle [\D X]_M - X^\top \otimes \sff_\nu(\xi^\top) - \xi^\top \otimes \D^M \langle X, \nu \rangle + \langle X, \nu \rangle \S.\]
This means that for a local orthonormal frame $\{e_i\}$ on $M$ we have
\begin{align*}
&\left\langle \D_{e_j} (X^{\top_\xi}), e_i \right\rangle \\
&= \langle \xi, \nu \rangle \langle \D_{e_j}X, e_i \rangle - \langle X, e_i \rangle \sff_{\nu}(\xi^\top, e_j) - \langle \xi, e_i \rangle \langle \D^M\langle X, \nu \rangle, e_j \rangle + \langle X, \nu \rangle \langle \S(e_j), e_i \rangle.
\end{align*}
In particular, we have
\begin{equation} \label{eq: div_M X^{top_xi}}
\div_M X^{\top_\xi} = \langle \xi, \nu \rangle \div_M X + \langle X, \nu \rangle H_{\xi} - \left\langle \sff_\nu(X^\top) + \D^M \langle X, \nu \rangle, \xi \right\rangle.
\end{equation}
\end{lemma}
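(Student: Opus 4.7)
The plan is to expand $X^{\top_\xi} = \langle \xi, \nu\rangle X - \langle X, \nu\rangle \xi$ directly and differentiate along $e_j$ by the product rule. Since $\langle X^{\top_\xi}, \nu\rangle = 0$ identically, $X^{\top_\xi}$ is tangent to $M$, so $\D^M X^{\top_\xi}$ is precisely the tangential projection of the ambient Euclidean derivative and no extra projection is needed at the end.

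The only inputs beyond linearity are two consequences of the equiaffine hypothesis, each used once. First, write
\[ \D_{e_j}\langle \xi, \nu\rangle = \langle \D_{e_j}\xi, \nu\rangle + \langle \xi, \D_{e_j}\nu\rangle; \]
the first summand vanishes because $\D_{e_j}\xi \in T_xM$ by equiaffineness, while the second equals $-\langle \xi, \sff_\nu(e_j)\rangle = -\sff_\nu(\xi^\top, e_j)$ by Weingarten for $\nu$. Second, the Weingarten equation \eqref{eq: Weingarten equation} with $\tau \equiv 0$ gives $\D_{e_j}\xi = -\S(e_j)$, which converts the last summand of the product rule into $\langle X, \nu\rangle \S(e_j)$. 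The scalar derivative $\D_{e_j}\langle X, \nu\rangle$ is, by definition of $\D^M$ on functions, equal to $\langle \D^M\langle X, \nu\rangle, e_j\rangle$.

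Assembling these pieces yields
\[ \D_{e_j} X^{\top_\xi} = -\sff_\nu(\xi^\top, e_j)\, X + \langle \xi, \nu\rangle\, \D_{e_j}X - \langle \D^M\langle X, \nu\rangle, e_j\rangle\, \xi + \langle X, \nu\rangle\, \S(e_j), \]
and pairing with $e_i \in T_xM$ gives exactly the component formula in the statement, using $\langle X, e_i\rangle = \langle X^\top, e_i\rangle$ and the analogous identity for $\xi$. The divergence formula then follows by tracing over $i = j$: the first term contracts to $-\sff_\nu(\xi^\top, X^\top) = -\langle \sff_\nu(X^\top), \xi\rangle$; the second to $\langle \xi, \nu\rangle \div_M X$; the third to $-\langle \D^M\langle X, \nu\rangle, \xi\rangle$, since $\sum_i \langle \xi, e_i\rangle e_i = \xi^\top$ and $\D^M\langle X, \nu\rangle$ is tangential; and the last to $\langle X, \nu\rangle \tr(\S) = \langle X, \nu\rangle H_\xi$. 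I do not anticipate a genuine obstacle beyond careful bookkeeping; the substantive content is that equiaffineness eliminates the transversal connection form $\tau$ that would otherwise contaminate both the coefficient $\D_{e_j}\langle \xi, \nu\rangle$ and the Weingarten replacement of $\D_{e_j}\xi$.
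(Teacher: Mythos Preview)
Your proof is correct and follows essentially the same approach as the paper: both differentiate the defining expression $X^{\top_\xi} = \langle \xi, \nu\rangle X - \langle X, \nu\rangle \xi$ by the product rule, use equiaffineness to compute $\D_{e_j}\langle \xi, \nu\rangle = -\sff_\nu(\xi^\top, e_j)$, and then pair with $e_i$ and trace. You have merely spelled out the substitution $\D_{e_j}\xi = -\S(e_j)$ and the tracing step that the paper leaves implicit.
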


\begin{proof}
Since $\xi$ is equiaffine on $M$, we have
\[\D_{e_j} \langle \xi, \nu \rangle = \langle \xi, \D_{e_j} \nu \rangle = - \langle \sff_{\nu}(\xi^\top), e_j \rangle. \]
Therefore $\D^M \langle \xi, \nu \rangle = - \sff_{\nu}(\xi^\top)$. Using this, we get
\[\D_{e_j}(X^{\top_\xi}) = \langle \xi, \nu \rangle \D_{e_j} X - \sff_{\nu}(\xi^\top, e_j) X - \langle \D^M \langle X, \nu \rangle, e_j \rangle \xi - \langle X, \nu \rangle \D_{e_j} \xi.\]
The rest of the proof follows easily from here.
\end{proof}

\begin{lemma} \label{lem: div_M x^top_xi}
	Suppose that $X$ is a smooth vector field on an equiaffine hypersurface $(M, \xi)$. If $\D_E X$ is tangent for every $E \in TM$, then
	\[\div_M X^{\top_\xi} = \langle \xi, \nu \rangle \div_M X + \langle X, \nu \rangle H_{\xi}.\]
	In particular, we have
	\begin{enumerate}[leftmargin=\parindent+0.5cm,label=\textnormal{(\roman*)}]
		\item $\div_M b^{\top_\xi} = \langle b, \nu \rangle H_{\xi}$ where $b$ is a constant vector.
		\item $\div_M x^{\top_\xi} = n \langle \xi, \nu \rangle + \langle x, \nu \rangle H_{\xi}$ where $x$ is the position vector.
	\end{enumerate}
\end{lemma}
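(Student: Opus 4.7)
The plan is to deduce the result directly from formula \eqref{eq: div_M X^{top_xi}} in the previous lemma. That formula expresses $\div_M X^{\top_\xi}$ as $\langle \xi, \nu \rangle \div_M X + \langle X, \nu \rangle H_\xi$ plus the correction term $-\langle \sff_\nu(X^\top) + \D^M \langle X, \nu \rangle,\, \xi \rangle$, so the main task is to show that the tangency hypothesis on $\D_E X$ forces the correction term to vanish.

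To this end, I would pick an arbitrary tangent vector $E \in T_xM$ and compute
\[
E\bigl(\langle X, \nu \rangle\bigr) = \langle \D_E X, \nu \rangle + \langle X, \D_E \nu \rangle = -\langle X, \sff_\nu(E) \rangle = -\langle \sff_\nu(X^\top), E \rangle,
\]
where the first term vanishes by the hypothesis that $\D_E X$ is tangent, the second uses the definition of the shape operator, and the last step uses that $\sff_\nu(E)$ is tangent and $\sff_\nu$ is symmetric. This identifies $\D^M \langle X, \nu \rangle = -\sff_\nu(X^\top)$, so the bracket $\sff_\nu(X^\top) + \D^M \langle X, \nu \rangle$ is identically zero. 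Substituting this into \eqref{eq: div_M X^{top_xi}} yields the main identity.

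Finally, items (i) and (ii) follow by specialization. For (i), the constant vector field $b$ satisfies $\D_E b = 0$, which is trivially tangent, and $\div_M b = 0$ for the same reason; the formula then collapses to $\langle b, \nu\rangle H_\xi$. For (ii), the position vector satisfies $\D_E x = E \in T_xM$, so the tangency hypothesis holds, and a local orthonormal tangent frame $\{e_i\}$ gives $\div_M x = \sum_i \langle e_i, e_i \rangle = n$, which produces the stated formula.

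I do not anticipate any real obstacle: the entire content reduces to noticing that the extra term in \eqref{eq: div_M X^{top_xi}} is exactly the obstruction measured by the normal component of $\D_E X$, which is killed by hypothesis. The only subtlety worth double-checking is the sign convention in the Weingarten identity $\D_E \nu = -\sff_\nu(E)$ adopted in the paper's conventions, which I would verify against the definition $\sff_\nu(X) = -\D_X \nu$ recorded in the introduction.
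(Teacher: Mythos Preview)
Your proposal is correct and follows essentially the same approach as the paper: both arguments reduce to showing that the tangency hypothesis forces $\D^M \langle X, \nu \rangle = -\sff_\nu(X^\top)$, which kills the correction term in \eqref{eq: div_M X^{top_xi}}. You in fact spell out this computation more explicitly than the paper, which simply refers back to the analogous step in the proof of Lemma~\ref{lem: D^M X^top_xi}.
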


\begin{proof}
Since $\D_E X$ is tangent for every $E \in TM$ (i.e. $X$ satisfies the equiaffine condition), similarly to the proof of Lemma \ref{lem: D^M X^top_xi}, we have $\D^M \langle X, \nu \rangle = -\sff_\nu(X^\top)$. The conclusion follows easily from \eqref{eq: div_M X^{top_xi}}.
\end{proof}

\begin{lemma} \label{lem: div_M fX^top_xi}
	For every smooth function $f$ and vector field $X$ on an equiaffine hypersurface $(M, \xi)$, we have
	\begin{align*}
		\div_M(f X^{\top_\xi}) &= f \div_M X^{\top_\xi} + \langle \D^M f \wedge \nu, \, X \wedge \xi \rangle \\
		&= f \div_M X^{\top_\xi} + \langle \xi, \nu \rangle \langle \D^M f, X \rangle - \langle X, \nu \rangle \langle \D^M f, \xi \rangle.
	\end{align*}
\end{lemma}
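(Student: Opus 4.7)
The plan is to recognize this as essentially the Leibniz rule for the divergence of a function times a tangential vector field. The crucial observation is that $X^{\top_\xi}$ is by definition tangent to $M$, since it is the projection of $\langle\xi,\nu\rangle X$ along $\xi$ onto $T_xM$: writing $X = X^\top + \langle X,\nu\rangle\nu$ and $\xi = \xi^\top + \langle\xi,\nu\rangle\nu$, one checks that
\[
X^{\top_\xi} = \langle\xi,\nu\rangle X - \langle X,\nu\rangle\xi = \langle\xi,\nu\rangle X^\top - \langle X,\nu\rangle\xi^\top \in T_xM.
\]
Hence the classical Leibniz rule on $M$ applies and gives
\[
\div_M(fX^{\top_\xi}) = f\,\div_M X^{\top_\xi} + \langle \D^M f,\, X^{\top_\xi}\rangle.
\]

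Next I would expand $\langle \D^M f,\,X^{\top_\xi}\rangle$ using the defining formula $X^{\top_\xi} = \langle\xi,\nu\rangle X - \langle X,\nu\rangle\xi$ by bilinearity:
\[
\langle \D^M f,\, X^{\top_\xi}\rangle = \langle\xi,\nu\rangle\langle \D^M f,X\rangle - \langle X,\nu\rangle\langle \D^M f,\xi\rangle.
\]
This immediately yields the second form of the identity in the statement. The first form, with the wedge-product notation, then follows from the standard identity
\[
\langle a\wedge b,\,c\wedge d\rangle = \langle a,c\rangle\langle b,d\rangle - \langle a,d\rangle\langle b,c\rangle,
\]
applied with $a=\D^M f$, $b=\nu$, $c=X$, $d=\xi$.

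There is no real obstacle here: the only minor point worth flagging is that in the expression $\langle \D^M f, X\rangle$ the vector $X$ need not be tangent to $M$, but since $\D^M f \in TM$ this inner product automatically picks out $X^\top$, so the expansion above is valid regardless of whether $X$ is tangential. The same remark applies to $\langle \D^M f,\xi\rangle$. In short, this lemma is a one-line consequence of the Leibniz rule plus linearity, and it is stated separately only because the two equivalent forms will both be convenient in the proof of the monotonicity formula.
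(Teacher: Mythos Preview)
Your proposal is correct and follows essentially the same approach as the paper: apply the Leibniz rule for the tangential divergence to obtain $\div_M(fX^{\top_\xi}) = f\,\div_M X^{\top_\xi} + \langle \D^M f, X^{\top_\xi}\rangle$, then expand the inner product using the definition of $X^{\top_\xi}$. The only cosmetic difference is that the paper passes through the wedge form first (writing $X^{\top_\xi} = (X\wedge\xi)\nu$ and using $\langle \D^M f, (X\wedge\xi)\nu\rangle = \langle \D^M f\wedge\nu, X\wedge\xi\rangle$), whereas you reach the expanded form first and then recognize it as the wedge pairing; either order is fine.
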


\begin{proof}
	We have
	\begin{align*}
		\div_M(f X^{\top_\xi}) &= f \div_M X^{\top_\xi} + \langle \D^M f, X^{\top_\xi} \rangle\\
		&= f \div_M X^{\top_\xi} + \langle \D^M f, \, (X \wedge \xi)(\nu) \rangle \\
		&= f \div_M X^{\top_\xi} + \langle \D^M f \wedge \nu, \, X \wedge \xi \rangle. \qedhere
	\end{align*}
\end{proof}

A monotonicity formula in affine geometry setting is given in the following statement.

\begin{proposition} \label{prop: monotonicity formula for equiaffine}
	Let $(M, \xi)$ be a compact equiaffine hypersurface of $\Real^{n+1}$ with vanishing affine mean curvature and smooth boundary $\partial M$. Suppose that $\varphi \geq 0$ is a positively $1$-homogeneous function on $\Real^{n+1}$, smooth on $\{x: \varphi(x) \neq 0\}$, whose sublevel sets $B^\varphi_r \coloneqq \{x \in \Real^{n+1}: \varphi(x) < r\}$, $r > 0$, have smooth boundaries. If $\partial M \subset \partial B^\varphi_{\bar{r}}$ for some $\bar{r} > 0$, then for every $0 < s < r < \bar{r}$, we have
	\begin{equation*}
		\frac{1}{r^n} \int_{M \cap B^\varphi_r} \langle \xi, \nu \rangle - \frac{1}{s^n} \int_{M \cap B^\varphi_s} \langle \xi, \nu \rangle = \int_{M \cap B^\varphi_r \backslash B^\varphi_s} \frac{\langle x, \nu \rangle \langle \D \varphi(x), \xi \rangle}{\varphi(x)^{n+1}}.
	\end{equation*}
\end{proposition}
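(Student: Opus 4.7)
The plan is to apply the divergence theorem on $M \cap B^\varphi_r$ to the affine tangential position field $T := x^{\top_\xi}$, and then differentiate in $r$ via the coarea formula for $\varphi|_M$.

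First I would invoke Lemma~\ref{lem: div_M x^top_xi}(ii): since $H_\xi \equiv 0$ by hypothesis, it gives $\div_M T = n\langle\xi,\nu\rangle$; and one checks $\langle T,\nu\rangle = \langle\xi,\nu\rangle\langle x,\nu\rangle - \langle x,\nu\rangle\langle\xi,\nu\rangle = 0$, so $T$ is tangent to $M$. Since $\partial M \subset \{\varphi = \bar{r}\}$ and $r < \bar{r}$, the piece $\partial M$ does not meet $B^\varphi_r$, so the relative boundary of $M\cap B^\varphi_r$ inside $M$ is exactly the level set $M\cap\{\varphi=r\}$; its outward unit conormal along $M$ is $\D^M\varphi/|\D^M\varphi|$, where $\D^M\varphi := (\D\varphi)^\top$ is the tangential gradient. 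The divergence theorem, together with Euler's identity $\langle \D\varphi(x), x\rangle = \varphi(x)$, then yields
\[ n\int_{M\cap B^\varphi_r}\langle\xi,\nu\rangle = r\int_{M\cap\{\varphi=r\}}\frac{\langle\xi,\nu\rangle}{|\D^M\varphi|}\,d\mathcal{H}^{n-1} - \int_{M\cap\{\varphi=r\}}\frac{\langle x,\nu\rangle\langle\D\varphi,\xi\rangle}{|\D^M\varphi|}\,d\mathcal{H}^{n-1}. \]

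Next, set $A(r) := \int_{M\cap B^\varphi_r}\langle\xi,\nu\rangle$. The coarea formula applied to $\varphi|_M$ gives $A'(r) = \int_{M\cap\{\varphi=r\}}\langle\xi,\nu\rangle/|\D^M\varphi|\,d\mathcal{H}^{n-1}$ for a.e.\ $r$. Combined with the previous identity,
\[ \frac{d}{dr}\!\left(\frac{A(r)}{r^n}\right) = \frac{rA'(r) - nA(r)}{r^{n+1}} = \frac{1}{r^{n+1}}\int_{M\cap\{\varphi=r\}}\frac{\langle x,\nu\rangle\langle\D\varphi,\xi\rangle}{|\D^M\varphi|}\,d\mathcal{H}^{n-1}. \]
Integrating from $s$ to $r$ and applying coarea in reverse — using $\varphi(x) = t$ on the slice $\{\varphi = t\}$ to absorb the $t^{-(n+1)}$ factor into $\varphi(x)^{-(n+1)}$ — then delivers the stated identity.

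The main delicacy is the geometric bookkeeping: correctly identifying the outer conormal as $\D^M\varphi/|\D^M\varphi|$ with the right sign, and handling the critical set of $\varphi|_M$ where $|\D^M\varphi|$ vanishes. The latter has $r$-measure zero by Sard's theorem, and since both sides of the target identity are absolutely continuous in $r$, restricting the computation to regular values of $\varphi|_M$ is sufficient to conclude.
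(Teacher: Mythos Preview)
Your argument is correct and shares the same core ingredient with the paper's proof: the tangential vector field $T = x^{\top_\xi}$ together with Lemma~\ref{lem: div_M x^top_xi}(ii), which gives $\div_M T = n\langle\xi,\nu\rangle$ under $H_\xi = 0$, and the boundary identity $\langle T,\eta\rangle|\D^M\varphi| = \varphi\,\langle\xi,\nu\rangle - \langle x,\nu\rangle\langle\D\varphi,\xi\rangle$ on the level sets.

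The packaging, however, is different. The paper does not differentiate in $r$; instead it introduces the weighted field $V = x^{\top_\xi}/(n\varphi^n)$, computes via Lemma~\ref{lem: div_M fX^top_xi} that $\div_M V = \langle x,\nu\rangle\langle\D\varphi,\xi\rangle/\varphi^{n+1}$, and applies the divergence theorem directly on the annulus $M\cap B^\varphi_r\setminus B^\varphi_s$; the boundary contributions are then matched to $\tfrac{1}{r^n}\int_{M\cap B^\varphi_r}\langle\xi,\nu\rangle$ and its $s$-analogue by a second application of the divergence theorem with $x^{\top_\xi}$ itself. This avoids the coarea formula, Sard's theorem, and the a.e./absolute-continuity bookkeeping entirely, and it yields the identity for \emph{every} $0<s<r<\bar r$ without passing through regular values. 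Your route is the more classical ``differentiate the area ratio'' derivation and is perhaps more transparent about \emph{why} the formula holds; the paper's weighted-field trick (following Brendle's survey) is slicker and sidesteps the measure-theoretic caveats.
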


\begin{proof}
The main idea, which is suggested by a well-known proof of the classical monotonicity formula for minimal hypersurfaces (see \cite{Brendle_Toulouse2023_MinimalSurvey}), is to consider the following tangent vector field
\[V(x) = \frac{x^{\top_\xi}}{n \varphi(x)^n}, \quad x \in M \backslash \{\varphi = 0\}.\]
Using Lemmas \ref{lem: div_M x^top_xi}, \ref{lem: div_M fX^top_xi}, and the assumption $H_\xi = 0$ on $M$, we compute
	\begin{align*}
		\div_M V(x) &= \frac{\langle \xi, \nu \rangle}{\varphi(x)^n} - \frac{\langle \xi, \nu \rangle \langle \D^M \varphi(x), x \rangle}{\varphi(x)^{n+1}} + \frac{\langle x, \nu \rangle \langle \D^M \varphi(x), \xi \rangle}{\varphi(x)^{n+1}} \\
		&= \frac{\langle \xi, \nu \rangle \langle \D \varphi(x), \nu \rangle \langle x, \nu \rangle}{\varphi(x)^{n+1}} + \frac{\langle x, \nu \rangle \langle \D^M \varphi(x), \xi \rangle}{\varphi(x)^{n+1}} \qquad \text{since } \varphi(x) = \langle \D \varphi(x), x \rangle \\
		&= \frac{\langle x, \nu \rangle \langle \D \varphi(x), \xi \rangle}{\varphi(x)^{n+1}}.
	\end{align*}
	 By divergence theorem and Lemma \ref{lem: div_M x^top_xi} with $H_\xi = 0$ again, it follows that
	\begin{align*}
		\frac{1}{r^n} \int_{M \cap B^\varphi_r} \langle \xi, \nu \rangle - \frac{1}{s^n} \int_{M \cap B^\varphi_s} \langle \xi, \nu \rangle &= \frac{1}{nr^n} \int_{M \cap B^\varphi_r} \div_M x^{\top_\xi} - \frac{1}{ns^n} \int_{M \cap B^\varphi_s} \div_M x^{\top_\xi} \\
		&= \frac{1}{nr^n} \int_{M \cap \partial B^\varphi_r} \langle x^{\top_\xi}, \eta \rangle -  \frac{1}{ns^n} \int_{M \cap \partial B^\varphi_s} \langle x^{\top_\xi}, \eta \rangle \\
		&= \int_{M \cap B^\varphi_r \backslash B^\varphi_s} \div_M V \\
		&= \int_{M \cap B^\varphi_r \backslash B^\varphi_s} \frac{\langle x, \nu \rangle \langle \D \varphi(x), \xi \rangle}{\varphi(x)^{n+1}}. \qedhere
	\end{align*}
\end{proof}

\begin{proof}[Proof of Theorem \ref{thm: monotonicity for anisotropic minimal}]
The main result is a special case of Proposition \ref{prop: monotonicity formula for equiaffine} when $\xi = \nu_F$ and $\varphi = F^\circ$.
\end{proof}

\begin{proof}[Proof of Corollary \ref{cor: lower bound for anisotropic minimal in Wulff}]
This follows from Theorem \ref{thm: monotonicity for anisotropic minimal} by noting that
\begin{align*}
	\lim\limits_{r \to 0} \,\frac{1}{r^n} \int_{M \cap (r\Omega)} F(\nu) = F(\nu(0)) \left| \Omega \cap T_0M \right|
	\end{align*}
since $0 \in M$.
\end{proof}

\appendix

\section{Minkowski-type formula for equiaffine hypersurfaces}

In this appendix we prove a Minkowski-type formula for affine $k$-curvatures, generalizing the anisotropic version in \cite{He-Li_Sinica2008_Integral_Minkowski}. When the affine fundamental form is nondegenerate, the formula is classical (see, e.g., \cite{Nomizu-Sasaki_Book_Affine_geometry}).

\subsection{Elementary symmetric functions and Newton tensors}
Given a square matrix $A = (a_{ij})$ of size $n$ (not necessarily symmetric), let $\lambda_1, \dots, \lambda_n$ denote its (possibly complex) eigenvalues. The elementary symmetric functions are defined by
$$\sigma_0(A) = 1 \quad \text{and} \quad \sigma_k(A) = \sum_{i_1 < \dots < i_k} \lambda_{i_1} \dots \lambda_{i_k} \quad \text{for } 1 \leq k \leq n.$$
Let $E_k(A)$ denote the sum of the principle minors of size $k$ of $A$. It is well-known (see e.g. \cite[Theorem 1.2.16]{Horn-Johnson_Matrix}) that
\begin{equation} \label{eq: sigma_k = sum of minors}
	\sigma_k(A) = E_k(A) = \frac{1}{k!} \sum \delta_{i_1, \dots, i_k}^{j_1, \dots, j_k} a_{i_1j_1} \dots a_{i_kj_k}.
\end{equation}
Here, $\delta_{i_1, \dots, i_k}^{j_1, \dots, j_k}$ denotes the generalized Kronecker symbol which takes value $1$ (resp. $-1$) if $i_1, \dots, i_k$ are distinct and $(i_1, \dots, i_k)$ is an even (resp. odd) permutation of $(j_1, \dots, j_k)$, and takes value $0$ in any remaining case.

For $0 \leq k \leq n$, the Newton tensor $T_k(A)$ is the matrix given by (see e.g. \cite{He-Li_Sinica2008_Integral_Minkowski, DellaPietra-Gavitone-Xia_Adv2021_Symmetry_mixed_volume, Reilly_MichMJ1973_curvatures_graph, Reilly_Duke1976_geometry_nonparametric} with possibly different conventions)
\begin{equation*}
	T_k(A) = \sum_{i = 0}^{k} (-1)^i \sigma_{k-i}(A) A^i = \sigma_k(A) I_n - \sigma_{k-1}(A) A + \dots + (-1)^k A^k,
\end{equation*}
which has a useful recursive formula
\begin{equation}\label{eq: T_k recursive}
	T_{k}(A) = \sigma_{k}(A) I_n - T_{k-1}(A) \cdot A.
\end{equation}
The entries of $T_k(A)$ are computed by (see e.g. \cite{He-Li_Sinica2008_Integral_Minkowski, DellaPietra-Gavitone-Xia_Adv2021_Symmetry_mixed_volume})
\begin{equation} \label{eq: T_k entries}
	[T_{k}(A)]_{ji} = \frac{1}{k!} \sum \delta_{i_1, \dots, i_k, i}^{j_1, \dots, j_k, j} a_{i_1j_1} \dots a_{i_kj_k}.
\end{equation}
In view of (\ref{eq: sigma_k = sum of minors}), this yields (note the reversed order of $i,j$)
\begin{equation}\label{eq: T_{k-1} as gradient of sigma_k}
	[T_{k-1}(A)]_{ji} = \frac{\partial \sigma_{k}(A)}{\partial a_{ij}}, \quad 1 \leq k \leq n.
\end{equation}
Furthermore, since $\sigma_{k}$ is $k$-homogeneous, by (\ref{eq: T_{k-1} as gradient of sigma_k}) and Euler homogeneous theorem, this implies that
\begin{equation}\label{eq: tr [T_k(A)]A}
	\sigma_{k}(A) = \frac{1}{k} [T_{k-1}(A)]_{ji} a_{ij} = \frac{1}{k} \tr \left(T_{k-1}(A) \cdot A \right).
\end{equation}
Taking trace of (\ref{eq: T_k recursive}) and using (\ref{eq: tr [T_k(A)]A}) gives
\begin{equation}\label{eq: tr T_k}
	\tr T_{k}(A) = (n-k) \sigma_{k}(A).
\end{equation}

\subsection{Minkowski-type formula}

We always assume that $(M, \xi)$ is an equiaffine hypersurface of $\Real^{n+1}$ with the affine fundamental form $h$ and the affine shape operator $\S$. 

\begin{lemma} \label{lem: self-adjoint lemma}
For every real polynomial $P$, the tensor $\sff_\nu P(\S)$ is self-adjoint.
\end{lemma}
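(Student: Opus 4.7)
The plan is to reduce everything to one algebraic identity, namely
\[\sff_\nu \circ \S = \S^{\ast} \circ \sff_\nu\]
as linear endomorphisms of $T_xM$, where $\S^{\ast}$ denotes the Euclidean adjoint of $\S$. Granting this identity, the lemma follows by a short induction: $\sff_\nu \S^k = (\S^{\ast})^k\, \sff_\nu$ for every $k \geq 0$, and since $\sff_\nu$ is Euclidean self-adjoint as the classical second fundamental form, one obtains $(\sff_\nu \S^k)^{\ast} = \sff_\nu^{\ast}\, \S^k = \sff_\nu \S^k$. Linearity in $P$ then promotes the conclusion from monomials to arbitrary real polynomials.

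To establish the identity, I first use the relation $\sff_\nu(X, Y) = \langle \xi, \nu \rangle\, h(X, Y)$ from \eqref{eq: Gauss equation rewritten} to rewrite it in terms of $h$; it amounts to showing that $\S$ is $h$-self-adjoint, i.e.\ $h(\S X, Y) = h(X, \S Y)$ for all $X, Y \in T_xM$. This is the classical Ricci equation for equiaffine hypersurfaces. To derive it, I would expand the flatness of the Euclidean connection applied to the equiaffine transversal field $\xi$,
\[0 = \D_X \D_Y \xi - \D_Y \D_X \xi - \D_{[X,Y]} \xi,\]
substitute the equiaffine Weingarten equation $\D_Z \xi = -\S(Z)$, and then apply the Gauss equation \eqref{eq: Gauss equation} to decompose $\D_X(\S Y)$ and $\D_Y(\S X)$ into tangent and transversal components. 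The resulting identity splits into a tangent part, recovering the Codazzi equation $(\nabla_X \S) Y = (\nabla_Y \S) X$, and a transversal part giving precisely $h(X, \S Y) = h(Y, \S X)$; combined with the symmetry of $h$, this is exactly the required $h$-self-adjointness of $\S$.

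I do not anticipate any genuine obstacle, since the whole argument is a direct translation of the classical Nomizu–Sasaki Ricci identity into the present setup. The only point requiring mild care is that self-adjointness is asserted with respect to the Euclidean inner product rather than with respect to $h$, but the positive scalar factor $\langle \xi, \nu \rangle$ relating $\sff_\nu$ and $h$ cancels out pointwise and has no effect on symmetry; no nondegeneracy of $\sff_\nu$ (or of $h$) is needed anywhere in the argument.
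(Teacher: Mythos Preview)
Your proposal is correct and follows essentially the same route as the paper: both reduce to the identity $\sff_\nu \S = \S^{\ast}\sff_\nu$ (equivalently, $h$-self-adjointness of $\S$), then push it through to powers of $\S$ by the same induction. The only cosmetic difference is that the paper simply cites the Ricci formula $h(X,\S Y)=h(\S X,Y)$ from Nomizu--Sasaki, whereas you propose to re-derive it from the flatness of $\D$ applied to $\xi$; both are fine.
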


\begin{proof}
The Ricci formula for equiaffine hypersurfaces (see {\cite[Theorem 2.4 in Chapter II]{Nomizu-Sasaki_Book_Affine_geometry}}) asserts that
\[h(X, \S(Y)) = h(\S(X), Y) \quad \text{for all} \;\; X, Y \in \mathfrak{X}(M).\]
It follows from this and \eqref{eq: Gauss equation rewritten} that the tensor $\sff_\nu \S$ is self-adjoint. In other words,
\[\sff_\nu \S = \S^T \sff_\nu.\]
Thus, for every $m \in \mathbb{N}$, we have
\[\sff_\nu \S^m = \S^T \sff_\nu \S^{m-1} = \dots = (\S^T)^m \sff_\nu = (\sff_\nu \S^m)^T.\]
This shows that $\sff_\nu P(\S)$ is self-adjoint for every real polynomial $P$.
\end{proof}

Next, we prove a Codazzi property for $\S$ with respect to the Euclidean connection on $M$.

\begin{lemma} \label{lem: Codazzi property for affine shape operator}
The affine shape operator satisfies the following property:
\[[\D^M_X \S](Y) = [\D^M_Y \S](X) \quad \text{for all} \;\; X, Y \in \mathfrak{X}(M).\]
\end{lemma}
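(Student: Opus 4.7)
The plan is to exploit the flatness of the Euclidean connection $\D$ on $\Real^{n+1}$, applied to the transversal vector field $\xi$, and then carefully split the resulting identity into its tangential and normal parts relative to the Euclidean geometry of $M$.

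Concretely, let $X, Y \in \mathfrak{X}(M)$ be extended locally. Since $\D$ is flat, $\D_X \D_Y \xi - \D_Y \D_X \xi - \D_{[X,Y]}\xi = 0$. The equiaffine condition combined with the Weingarten equation \eqref{eq: Weingarten equation} gives $\D_Z \xi = -\S(Z) \in TM$ for every $Z \in TM$, so the flatness identity becomes
\[\D_X \S(Y) - \D_Y \S(X) = \S([X,Y]).\]
Next, I would use the Euclidean Gauss formula $\D_X W = \D^M_X W + \sff_\nu(X, W)\nu$ for tangent $W$ together with the product rule $\D^M_X(\S(Y)) = [\D^M_X \S](Y) + \S(\D^M_X Y)$ to rewrite each term on the left-hand side. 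This yields
\begin{align*}
[\D^M_X \S](Y) - [\D^M_Y \S](X) + \S\bigl(\D^M_X Y - \D^M_Y X\bigr) + \bigl(\sff_\nu(X, \S Y) - \sff_\nu(Y, \S X)\bigr)\nu = \S([X,Y]).
\end{align*}

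The final step is to separate tangential and normal components. The normal component reduces to $\sff_\nu(X, \S Y) = \sff_\nu(Y, \S X)$, which is precisely the self-adjointness content of Lemma \ref{lem: self-adjoint lemma}, so it is automatically satisfied and carries no new information. For the tangential component, the Euclidean connection $\D^M$ is torsion-free, hence $\D^M_X Y - \D^M_Y X = [X,Y]$, so $\S(\D^M_X Y - \D^M_Y X) = \S([X,Y])$ cancels with the right-hand side. What remains is exactly the desired Codazzi-type identity
\[[\D^M_X \S](Y) = [\D^M_Y \S](X).\]

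There is no real obstacle here: the only subtlety is tracking which connection is being used (the flat ambient $\D$, the induced Euclidean $\D^M$, and the affine $\nabla$ from \eqref{eq: Gauss equation}) and invoking torsion-freeness of $\D^M$ at the right moment. Notably, the proof does not pass through the affine connection $\nabla$ at all, and the self-adjointness from Lemma \ref{lem: self-adjoint lemma} appears naturally as the normal-component consistency check rather than as a tool.
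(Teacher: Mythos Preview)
Your proof is correct and follows essentially the same strategy as the paper: both use flatness of $\D$ applied to $\xi$, the equiaffine relation $\D_Z\xi=-\S(Z)$, and torsion-freeness of $\D^M$. The only cosmetic difference is that the paper projects tangentially at the outset via $\D^M_X(\S(Y))=(\D_X(\S(Y)))^\top$, thereby bypassing the normal-component discussion, whereas you expand with the Gauss formula and then split; the normal part you identify is an automatic consequence (equivalent to the $P(t)=t$ case of Lemma~\ref{lem: self-adjoint lemma}) and is not needed for the tangential conclusion.
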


\begin{proof}
For $X, Y \in \mathfrak{X}(M)$, by the facts that $\S(X) = -\D_X \xi$ and that $\xi$ is equiaffine, we have
\begin{align*}
[\D^M_X \S](Y) - [\D^M_Y \S](X) &= \D^M_X (\S(Y)) - \D^M_Y (\S(X)) - \S(\D^M_X Y - \D^M_Y X)\\
&= (\D_Y \D_X \xi - \D_X \D_Y \xi)^\top + \D_{[X, Y]} \xi\\
&= -  \D_{[X, Y]} \xi + \D_{[X, Y]} \xi\\
&= 0. \qedhere
\end{align*}
\end{proof}

Using Lemma \ref{lem: Codazzi property for affine shape operator}, a standard argument based on \eqref{eq: T_k entries} (see, for example, \cite{Reilly_MichMJ1973_curvatures_graph, He-Li_Sinica2008_Integral_Minkowski}) shows that the Newton tensors associated with the affine shape operator are divergence-free. We record this fact below.

\begin{lemma} \label{lem: T_k is divergence-free}
For each $k = 0, \dots, n-1$, the Newton tensor $T_k(\S)$ of the affine shape operator satisfies the following property:
\[\sum_{i=1}^{n} T_k(\S)_{ij,i} = 0 \quad \text{for each} \;\; j = 1, \dots, n. \]
Here $ T_k(\S)_{ij,l}$ denotes the components of $\D^M(T_k(\S))$ in a local orthonormal frame on $M$.
\end{lemma}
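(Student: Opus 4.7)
The plan is to follow the classical argument of \cite{Reilly_MichMJ1973_curvatures_graph, He-Li_Sinica2008_Integral_Minkowski}, which combines the combinatorial expression \eqref{eq: T_k entries} for the entries of $T_k(\S)$ with the Codazzi-type identity of Lemma \ref{lem: Codazzi property for affine shape operator}. First I would fix a point $p \in M$, choose a local orthonormal frame $\{e_i\}$ on $M$ that is geodesic at $p$ with respect to the Euclidean connection $\D^M$, and denote by $\S_{ij} \coloneqq \langle \S(e_j), e_i \rangle$ the components of $\S$ and by $\S_{ij,l}$ those of $\D^M \S$ in this frame. In such a frame, Lemma \ref{lem: Codazzi property for affine shape operator} translates, at $p$, into the symmetry $\S_{ij,l} = \S_{il,j}$.

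Next, by \eqref{eq: T_k entries} (after relabeling the outer indices $i,j$ to match the statement), the $(i,j)$-entry of $T_k(\S)$ reads
\[T_k(\S)_{ij} = \frac{1}{k!} \sum \delta^{i_1, \dots, i_k, i}_{j_1, \dots, j_k, j}\, \S_{j_1 i_1} \cdots \S_{j_k i_k}.\]
Differentiating at $p$ and exploiting the invariance of the generalized Kronecker symbol under simultaneous permutations of the pairs $(i_m, j_m)$, the $k$ terms produced by the product rule all coincide, and contracting the derivative index with $i$ I would obtain
\[T_k(\S)_{ij,i} = \frac{1}{(k-1)!} \sum \delta^{i_1, \dots, i_k, i}_{j_1, \dots, j_k, j}\, \S_{j_1 i_1, i}\, \S_{j_2 i_2} \cdots \S_{j_k i_k}.\]

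The final step is to sum over $i$ and observe the following cancellation: for any fixed values of the remaining indices, the generalized Kronecker symbol is antisymmetric under the swap $i_1 \leftrightarrow i$ of upper indices, whereas the Codazzi identity gives $\S_{j_1 i_1, i} = \S_{j_1 i, i_1}$, which is symmetric in $(i_1, i)$. The contraction of an antisymmetric tensor with a symmetric one vanishes identically, so $\sum_i T_k(\S)_{ij,i} = 0$ at $p$, and hence everywhere on $M$ since $p$ was arbitrary. I do not anticipate a genuine obstacle here; the argument is essentially bookkeeping with the generalized Kronecker symbol, with the only geometric input being Lemma \ref{lem: Codazzi property for affine shape operator}.
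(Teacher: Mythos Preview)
Your argument is correct and is exactly the ``standard argument based on \eqref{eq: T_k entries}'' that the paper invokes: the paper does not spell out a proof but simply cites \cite{Reilly_MichMJ1973_curvatures_graph, He-Li_Sinica2008_Integral_Minkowski} and Lemma~\ref{lem: Codazzi property for affine shape operator}, and your write-up is precisely that computation. The only cosmetic point is that the case $k=0$ should be handled separately (it is trivial since $T_0(\S)=I_n$), as your displayed formula involves $1/(k-1)!$.
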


For each $k = 0, \dots, n$,  the \textit{normalized affine $k$-curvature} is defined by $\widetilde{H}_k \coloneqq \frac{\sigma_k(\S)}{{n \choose k}}$. Now we prove the Minkowski-type formula for $\widetilde{H}_k$.

\begin{theorem}
Suppose that $M$ is closed. Let $x$ be the position vector. We have
\[\int_M \langle \xi, \nu \rangle \widetilde{H}_k = -\int_M \langle x, \nu \rangle \widetilde{H}_{k+1} \quad \text{for each} \;\; k = 0, \dots, n-1.\]
\end{theorem}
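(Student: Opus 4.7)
The strategy is to produce a tangent vector field on $M$ whose divergence equals $(n-k)\sigma_k(\S)\langle \xi, \nu \rangle + (k+1)\sigma_{k+1}(\S)\langle x, \nu \rangle$; since $M$ is closed, integrating and applying the divergence theorem then gives the identity, with the passage from $\sigma_j$ to $\widetilde{H}_j$ accomplished by elementary binomial bookkeeping.

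Modeled on the proof of Proposition \ref{prop: monotonicity formula for equiaffine}, the natural candidate is
\[V \coloneqq T_k(\S)\bigl(x^{\top_\xi}\bigr),\]
a globally defined tangent vector field on $M$. Computing $\div_M V$ in a local orthonormal frame $\{e_i\}$ via the Leibniz rule, the contribution involving $\D^M T_k(\S)$ vanishes by Lemma \ref{lem: T_k is divergence-free}, leaving
\[\div_M V = \sum_{i,m} [T_k(\S)]_{im} \, \bigl\langle \D^M_{e_i} x^{\top_\xi}, e_m \bigr\rangle.\]
I would expand the inner factor using Lemma \ref{lem: D^M X^top_xi} applied to the position vector $X = x$, noting that $\D_E x = E$ is tangent for every $E \in TM$, so $\D^M \langle x, \nu \rangle = -\sff_\nu(x^\top)$. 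This yields four terms: contracting the ``identity'' term against $T_k(\S)$ produces $\langle \xi, \nu \rangle \tr T_k(\S) = (n-k)\sigma_k(\S)\langle \xi, \nu \rangle$ by \eqref{eq: tr T_k}, and contracting the ``shape operator'' term produces $\langle x, \nu \rangle \tr(T_k(\S) \cdot \S) = (k+1)\sigma_{k+1}(\S)\langle x, \nu \rangle$ by \eqref{eq: tr [T_k(A)]A}. The two remaining ``cross'' terms take the form
\[-\bigl\langle \sff_\nu(\xi^\top),\, T_k(\S)(x^\top) \bigr\rangle + \bigl\langle \sff_\nu(x^\top),\, T_k(\S)(\xi^\top) \bigr\rangle.\]

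The main obstacle is arranging the cancellation of these two cross terms. Using that $\sff_\nu$ is self-adjoint as an endomorphism of $T_xM$, the cancellation reduces to the assertion that $\sff_\nu \cdot T_k(\S)$ is itself self-adjoint, which is exactly what Lemma \ref{lem: self-adjoint lemma} supplies (since $T_k$ is a polynomial function of $\S$). Once this is in hand, one obtains $\div_M V = (n-k)\sigma_k(\S)\langle \xi,\nu\rangle + (k+1)\sigma_{k+1}(\S)\langle x,\nu\rangle$; integrating over the closed hypersurface $M$, invoking the divergence theorem, and using the identity $(n-k)\binom{n}{k} = (k+1)\binom{n}{k+1}$ together with $\sigma_j(\S) = \binom{n}{j}\widetilde{H}_j$ delivers the claimed Minkowski-type formula.
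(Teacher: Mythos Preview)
Your proposal is correct and follows essentially the same argument as the paper: the paper also takes $V = T_k(\S)(x^{\top_\xi})$, expands $\D^M x^{\top_\xi}$ via Lemma \ref{lem: D^M X^top_xi} with $\D^M\langle x,\nu\rangle = -\sff_\nu(x^\top)$, uses Lemma \ref{lem: T_k is divergence-free} to drop the derivative of $T_k(\S)$, cancels the two cross terms by the self-adjointness of $\sff_\nu T_k(\S)$ from Lemma \ref{lem: self-adjoint lemma}, and then applies \eqref{eq: tr [T_k(A)]A}--\eqref{eq: tr T_k} before integrating. Your write-up of the cross terms differs from the paper's only by a transposition convention, and the cancellation goes through for the same reason.
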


\begin{proof}
For a local orthonormal frame $\{e_i\}$ on $M$, by Lemma \ref{lem: D^M X^top_xi} and the fact that $\D^M \langle x, \nu \rangle = - \sff_\nu(x^\top)$, we have
\begin{align*}
\langle \D_{e_i} (x^{\top_\xi}), e_j \rangle = \langle \xi, \nu \rangle \delta_{ij} - \langle \sff_\nu(\xi^\top), e_i \rangle \langle x, e_j \rangle + \langle \sff_\nu(x^\top), e_i \rangle \langle \xi, e_j \rangle + \langle x, \nu \rangle \langle \S(e_i), e_j \rangle.
\end{align*}
In addition, since $\sff_\nu T_k(\S)$ is self-adjoint by Lemma \ref{lem: self-adjoint lemma}, we have
\begin{align*}
&T_k(\S)_{ij}\left(\langle \sff_\nu(x^\top), e_i \rangle \langle \xi, e_j \rangle - \langle \sff_\nu(\xi^\top), e_i \rangle \langle x, e_j \rangle \right)\\
&\qquad = \langle \sff_\nu T_k(\S)(\xi^\top), x^\top \rangle - \langle \sff_\nu T_k(\S)(x^\top), \xi^\top \rangle \\
&\qquad = 0.
\end{align*}
Putting these facts together, we apply Lemma \ref{lem: T_k is divergence-free} to obtain
\begin{align*}
\div_M(T_k(\S)(x^{\top_\xi})) &= T_k(\S)_{ij} \langle \D_{e_i} (x^{\top_\xi}), e_j \rangle \\
&= \langle \xi, \nu \rangle \tr T_k(\S) + \langle x,\nu \rangle \tr(T_k(\S) \cdot \S)\\
&= (n-k) \langle \xi, \nu \rangle \sigma_k(\S) + (k+1) \langle x, \nu \rangle \sigma_{k+1}(\S),
\end{align*}
where we have used \eqref{eq: tr [T_k(A)]A} and \eqref{eq: tr T_k} to derive the last equality. Integrating this over $M$ yields the desired formula.
\end{proof}

\section*{Acknowledgements}	
The author was supported by National Key R\&D Program of China 2020YFA0712800.

\begin{bibdiv}
\begin{biblist}
\bib{Allard_1974_area_integrand}{article}{
	author={Allard, William},
	title={A characterization of the area integrand},
	journal={Symp. Math.},
	volume={14},
	date={1974},
	pages={429–444},
}
\bib{Brendle_Toulouse2023_MinimalSurvey}{article}{
	author={Brendle, Simon},
	title={Minimal hypersurfaces and geometric inequalities},
	journal={Ann. Fac. Sci. Toulouse Math. (6)},
	volume={32},
	date={2023},
	number={1},
	pages={179--201}
}
\bib{Chodosh-Li_ForumPi2023}{article}{
	author={Chodosh, Otis},
	author={Li, Chao},
	title={Stable anisotropic minimal hypersurfaces in ${\bf R}^4$},
	journal={Forum Math. Pi},
	volume={11},
	date={2023},
	pages={Paper No. e3, 22}
}
\bib{Cozzi-Farina-Valdinoci_Adv2016_Monotonicity_anisotropic_PDE}{article}{
	author={Cozzi, Matteo},
	author={Farina, Alberto},
	author={Valdinoci, Enrico},
	title={Monotonicity formulae and classification results for singular,
		degenerate, anisotropic PDEs},
	journal={Adv. Math.},
	volume={293},
	date={2016},
	pages={343--381}
}
\bib{DePhilippis-DeRosa_CPAM2024_anisotropic_min-max}{article}{
	author={De Philippis, Guido},
	author={De Rosa, Antonio},
	title={The anisotropic min-max theory: existence of anisotropic minimal
		and CMC surfaces},
	journal={Comm. Pure Appl. Math.},
	volume={77},
	date={2024},
	number={7},
	pages={3184--3226}
}
\bib{DeRosa-Kolasinski-Santilli_ARMA2020_Uniqueness_critical_anisotropic_isoperimetric}{article}{
	author={De Rosa, Antonio},
	author={Kolasi\'nski, S\l awomir},
	author={Santilli, Mario},
	title={Uniqueness of critical points of the anisotropic isoperimetric
		problem for finite perimeter sets},
	journal={Arch. Ration. Mech. Anal.},
	volume={238},
	date={2020},
	number={3},
	pages={1157--1198}
}
\bib{DellaPietra-Gavitone-Xia_Adv2021_Symmetry_mixed_volume}{article}{
	author={Della Pietra, Francesco},
	author={Gavitone, Nunzia},
	author={Xia, Chao},
	title={Symmetrization with respect to mixed volumes},
	journal={Adv. Math.},
	volume={388},
	date={2021},
	pages={Paper No. 107887, 31}
}
\bib{Du-Yang_MathAnn2024_flatness_anisotropic}{article}{
	author={Du, Wenkui},
	author={Yang, Yang},
	title={Flatness of anisotropic minimal graphs in $\Bbb R^{n+1}$},
	journal={Math. Ann.},
	volume={390},
	date={2024},
	number={4},
	pages={4931--4949}
}
\bib{Ferone-Kawohl_PAMS2009}{article}{
	author={Ferone, Vincenzo},
	author={Kawohl, Bernd},
	title={Remarks on a Finsler-Laplacian},
	journal={Proc. Amer. Math. Soc.},
	volume={137},
	date={2009},
	number={1},
	pages={247--253}
}
\bib{Figalli-Maggi_ARMA2011_liquid_drops_crystals}{article}{
	author={Figalli, A.},
	author={Maggi, F.},
	title={On the shape of liquid drops and crystals in the small mass
		regime},
	journal={Arch. Ration. Mech. Anal.},
	volume={201},
	date={2011},
	number={1},
	pages={143--207}
}
\bib{He-Li_Sinica2008_Integral_Minkowski}{article}{
	author={He, Yi Jun},
	author={Li, Hai Zhong},
	title={Integral formula of Minkowski type and new characterization of the
		Wulff shape},
	journal={Acta Math. Sin. (Engl. Ser.)},
	volume={24},
	date={2008},
	number={4},
	pages={697--704}
}
\bib{Horn-Johnson_Matrix}{book}{
	author={Horn, Roger A.},
	author={Johnson, Charles R.},
	title={Matrix analysis},
	edition={2},
	publisher={Cambridge University Press, Cambridge},
	date={2013}
}
\bib{Jia-Wang-Xia-Zhang_ARMA2023_Alexandrov}{article}{
	author={Jia, Xiaohan},
	author={Wang, Guofang},
	author={Xia, Chao},
	author={Zhang, Xuwen},
	title={Alexandrov's theorem for anisotropic capillary hypersurfaces in
		the half-space},
	journal={Arch. Ration. Mech. Anal.},
	volume={247},
	date={2023},
	number={2},
	pages={Paper No. 25, 19}
}
\bib{Mooney-Yang_Invent2024_Bernstein}{article}{
	author={Mooney, Connor},
	author={Yang, Yang},
	title={The anisotropic Bernstein problem},
	journal={Invent. Math.},
	volume={235},
	date={2024},
	number={1},
	pages={211--232}
}
\bib{Nomizu-Sasaki_Book_Affine_geometry}{book}{
	author={Nomizu, Katsumi},
	author={Sasaki, Takeshi},
	title={Affine differential geometry},
	series={Cambridge Tracts in Mathematics},
	publisher={Cambridge University Press, Cambridge},
	date={1994}
}
\bib{Reilly_MichMJ1973_curvatures_graph}{article}{
	author={Reilly, Robert C.},
	title={On the Hessian of a function and the curvatures of its graph},
	journal={Michigan Math. J.},
	volume={20},
	date={1973},
	pages={373--383}
}
\bib{Reilly_Duke1976_geometry_nonparametric}{article}{
	author={Reilly, Robert C.},
	title={The relative differential geometry of nonparametric hypersurfaces},
	journal={Duke Math. J.},
	volume={43},
	date={1976},
	number={4},
	pages={705--721}
}
\bib{Schneider_Book_Convex}{book}{
	author={Schneider, Rolf},
	title={Convex Bodies: The Brunn–Minkowski Theory},
	edition={2},
	publisher={Cambridge Univ. Press, New York},
	date={2014}
}
\bib{Simon_Lectures on geometric measure theory}{book}{
	author={Simon, Leon},
	title={Lectures on geometric measure theory},
	series={Proceedings of the Centre for Mathematical Analysis, Australian
		National University},
	volume={3},
	publisher={Australian National University, Centre for Mathematical
		Analysis, Canberra},
	date={1983}
}
\bib{Xia_IUMJ2013_AnisotropicMinkowski}{article}{
	author={Xia, Chao},
	title={On an anisotropic Minkowski problem},
	journal={Indiana Univ. Math. J.},
	volume={62},
	date={2013},
	number={5},
	pages={1399--1430}
}

\end{biblist}
\end{bibdiv}
\end{document}